\def\max{\operatorname{max}}
\def\N{\mathbb{N}}
 \newcommand{\IN}[0]{\mathbb{N}}
\newcommand{\IQ}[0]{\mathbb{Q}} \newcommand{\IR}[0]{\mathbb{R}}
 \newcommand{\IT}[0]{\mathbb{T}}
 \newcommand{\IZ}[0]{\mathbb{Z}}
\newcommand{\CA}[0]{\mathcal{A}} \newcommand{\CB}[0]{\mathcal{B}}
 \newcommand{\CF}[0]{\mathcal{F}}
\newcommand{\CO}[0]{\mathcal{O}} 
\newcommand{\CQ}[0]{\mathcal{Q}} 
\newcommand{\CU}[0]{\mathcal{U}}
\def\gxp{G \rtimes_\theta P}
\newtheorem{thm}{Theorem}[section]
\newtheorem{lemma}[thm]{Lemma}
\newtheorem{proposition}[thm]{Proposition}
\theoremstyle{definition}
\theoremstyle{remark}
\newtheorem{remark}[thm]{Remark}
\newtheorem{example}[thm]{Example}
\numberwithin{equation}{section}
\begin{document}

\title[A Fej\'{e}r theorem for boundary quotients]{A Fej\'{e}r theorem for boundary quotients arising from algebraic dynamical systems}

\author[V.~Aiello]{Valeriano Aiello}
\address{Section de Math\'ematiques,
Universit\'e de Gen\`eve,
2-4 rue du Li\`evre, Case Postale 64, 1211 Gen\`eve 4, Suisse}
\email{valerianoaiello@gmail.com}

\author[R.~Conti]{Roberto Conti}
\address{Dipartimento di Scienze di Base e Applicate per l'Ingegneria, 
Sapienza Universit\`a di Roma, Via A. Scarpa 16, 00161 Roma, Italy}
\email{roberto.conti@sbai.uniroma1.it}

\author[S.~Rossi]{Stefano Rossi}
\address{Dipartimento di Matematica, Universit\`{a} di Roma Tor Vergata, Via della Ricerca Scientifica, 1, I-00133 Roma, Italy}
\email{rossis@mat.uniroma2.it1}

\thanks{Valeriano Aiello is supported by the Swiss National Science Foundation. Roberto Conti is supported by Sapienza Universit\`a di Roma.
Stefano Rossi is supported by European Research Council Advanced Grant 669240 QUEST}

\begin{abstract}
A Fej\'{e}r-type theorem is proved within the framework of $C^*$-algebras associated with certain irreversible algebraic dynamical systems. 
This makes it possible 
to strengthen a result on the structure of the relative commutant of a family of generating isometries in a boundary quotient.
\end{abstract}

\date{\today}
\maketitle

\section{Introduction}

The uniform convergence of the Fourier series $S_n(f)$ of a given continuous function 
$f\in C(\IT)$ is notoriously a delicate matter. Indeed, although strong sufficient conditions are easy to find, for instance that
$f$ is continuously differentiable, optimal conditions are not as easy to spot.
 No wonder, fairly pathological examples of continuous functions can be exhibited whose
Fourier series fails to converge uniformly. As a matter of fact, far more pathological examples can be given for which the Fourier series
behaves so badly as to diverge at some point. This was already known as long ago as $1876$ to du Bois-Reymond, who is generally credited with having produced  the first example of this sort.  
Even so, it is still possible to have uniform convergence provided that one considers the Ces\` aro mean 
of the sequence $\{S_n(f):n\in\IN\}$ instead. In other words, the sequence $\frac{1}{N}\sum_{n=0}^{N-1}S_n(f)$ does
converge uniformly to $f$, which is nothing but the content of a classical theorem proved by Fej\'er in 1904.
As well as being an interesting result in its own right, Fej\'er's theorem has proved to be a good source of inspiration for modern
research too. On the one hand, one may exploit the nexus of ideas and techniques involved in the proof of the classical result
to treat fine properties, such as weak amenability, of locally compact groups, as is done in e.g. \cite{Vale}.
On the other hand, one may also endeavour to generalize the content of the classical theorem to cover a wider variety of situations,
when $C(\IT)$ is replaced by a non-commutative $C^*$-algebra.
The first generalizations considered in this direction \cite{Zel} dealt with  $C^*$-algebras  obtained as crossed products (also see e.g. \cite{Davidson} for a simplified treatment where crossed products are only considered with respect to the action of $\IZ$, and
\cite[Proposition B1]{Raeb} for $C^*$-algebras acted upon by an $n$-dimensional torus).
 Greater generality was then achieved in \cite{ConBed2015, ContiBedos2016}. In recent times, crossed products by  locally compact groups with the approximation property have been addressed in \cite{Neufang} in a von Neumann algebra setting also. 
In this note, though, the focus is on the so-called boundary quotient $C^*$-algebras, which of late have been given a good deal
of attention in a series of papers \cite{BLS1, BS1, ACRS, BLSR19}. 
If we are to take a step further into the analysis of  these $C^*$-algebras, 
suitable approximation properties appear to be the needed tool to make finer computations. 
Therefore, what the note aims to do is bridge this gap by providing a theorem \`a la  Fej\'er, which we believe
may and indeed does come in useful at least to treat a natural subclass of boundary quotient $C^*$-algebras.

\section{Preliminaries and notations}\label{prel}
In this section we rather quickly recall the definitions and the properties of the  $C^*$-algebras 
that we will be treating in the present work. These are obtained out of so-called
algebraic dynamical systems, for which much wider information can be found in \cite{ACRS} and some of the references therein.
The algebraic dynamical systems we are actually interested in are of a rather special kind. Namely, they are a triple $(G, P,\theta)$, where:
\begin{enumerate}[(a)]
\item $G$ is a countable discrete group;
\item $P$ is  a discrete (countable)  abelian cancellative unital semigroup 
such that the intersection of two  principal  
  ideals is still a (possibly empty) principal 
 ideal; and
\item $\theta$ is an action of $P$ upon $G$ through injective homomorphisms that  \emph{preserve the order}, that is $pP \cap qP = rP$ implies $\theta_p(G) \cap \theta_q(G) = \theta_r(G)$.
\end{enumerate}
Furthermore, $P$ can be made into a directed set by introducing the order relation $p \geq q \Leftrightarrow p \in qP$.
We denote by $\CQ(\gxp)$ the associated boundary quotient $C^*$-algebra, which is by definition the universal $C^*$-algebra generated by a unitary representation $u$ of the group $G$ and a representation $s$ of the semigroup $P$ by isometries satisfying the relations
\begin{enumerate}[(I)]
\item $s_pu_g = u_{\theta_p(g)}s_p$,
\item $s_p^*u_g^{\phantom{*}}s_q = \begin{cases} u_{g_1}^{\phantom{*}}s_{p'}^{\phantom{*}}s_{q'}^*u_{g_2} &,\text{ if } g = \theta_p(g_1)\theta_q(g_2) \text{ and } pP \cap qP = pp'P, pp'=qq' \\ 0 &,\text {otherwise.} \end{cases}$
\item $\sum_{\overline{g} \in G/\theta_p(G)} e_{g,p} = 1 \quad \text{if } N_p < \infty$, 
\end{enumerate}
where $e_{g,p} \doteq u_g^{\phantom{*}}s_p^{\phantom{*}}s_p^*u_g^*$ and $N_p \doteq [G:\theta_p(G)]$. 

\bigskip 

As a cancellative abelian semigroup, $P$ embeds into its Grothendieck group $H\doteq P^{-1}P$, which  is obviously
countable and discrete.
As observed in \cite[Section 2]{ACRS} 
its Pontryagin dual $T$, which is a compact metrizable group,  acts on $\CQ(\gxp)$ by a gauge action $\gamma$ defined by $\gamma_\chi(u_gs_p) \doteq \chi(p) \ u_gs_p$. The corresponding fixed-point algebra 
 is denoted by  $\CF$ 
and coincides with  
$\overline{{\rm span}}\{ u_g^{\phantom{*}}s_p^{\phantom{*}}s_p^*u_h^* \mid g,h \in G, p \in P\}$. 
%
%
%

Denote by $P(\perp) \doteq \{ (p,q) \in P\times P\mid pP\cap qP = pqP\}$ the collection of all \emph{relatively prime pairs} in $P$. 
We define an equivalence relation $\sim$ on $P(\perp)\times P(\perp)$ 
 by saying that 
two pairs $(p,q),(\tilde{p},\tilde{q}) \in P(\perp)$ are equivalent whenever they satisfy $p^{-1}q = \tilde{p}^{-1}\tilde{q}$, or equivalently if there is an $x \in P^*$ such that $\tilde{p}=xp$ and $\tilde{q}=xq$. 
 For each $(p,q) \in P(\perp)$, we define a contractive linear map $F_{(p,q)}\colon \CQ(\gxp) \to \CF$ by $a \mapsto \int_T \gamma_\chi(s_p^{\phantom{*}}as_q^*) \ d\chi$, where the integration is with respect to the normalized Haar measure on the compact abelian group $T$. Given an element $a$ in $\CQ(\gxp)$, the terms $F_{(p,q)}(a)$ should be regarded as its Fourier coefficients
and, within this framework, one can show \cite[Section 4]{ACRS} that 
if the element actually sits in 
${\rm span}\{ u_g^{\phantom{*}}s_{p'}^{\phantom{*}}s_{q'}^*u_h^{\phantom{*}} \mid g,h \in G, p',q' \in P\}$, which is dense in $\CQ(\gxp)$, there is a uniquely determined finite set $A(a) \subset  P(\perp)/_\sim$ with the property that 
\begin{equation}\label{eq:F_(p,q) reconstruction formula}
\begin{array}{c} a = \sum\limits_{[(p,q)] \in A(a)} s_p^*F_{(p,q)}(a)s_q^{\phantom{*}}, \end{array}
\end{equation} 
In the next section we will show that for an arbitrary element of
 $\CQ(\gxp)$ 
a similar result 
holds.

\section{Main result}

Since P is an abelian semigroup, its Grothendieck group $H= P^{-1} P $ is also an abelian group. As such, it is in particular amenable. Now there are
many a way in which amenability can be expressed. One that is particularly suited to our context is exploiting the existence of the so-called F{\o}lner sequences.
A F{\o}lner sequence for a discrete group $H$ is a sequence $\{F_n: n\in\IN\}$ of finite subsets $F_n\subset H$ such that for every $h\in H$
$\lim_n \frac{|hF_n\triangle F_n|}{|F_n|}=0$, where $\triangle$ denotes the symmetric difference between two sets, i.e.
$A\triangle B\doteq (A\cup B)\setminus (A\cap B)$.   As can be proved, it is always possible to produce out of a given F{\o}lner sequence a new F{\o}lner sequence which is also increasing, that is
$F_n\subset F_{n+1}$ and exhausting, namely $\cup_n F_n=H$. Henceforth our F{\o}lner sequences will always be assumed  both increasing and
exhausting.\\

Associated with every F{\o}lner sequence, there is a sequence of real-valued functions defined as $\varphi_n(h)\doteq \frac{|hF_n\cap F_n|}{|F_n|}$, $h\in H$.
Their most relevant properties are summarized in a couple of easy yet useful lemmas. 

\begin{lemma}\label{support}
The functions $\varphi_n: H\rightarrow \IR$ are  finitely supported, and $\{{\rm supp}\,\varphi_n: n\in\IN\}$ is an increasing and exhausting family of subsets of $H$.
\end{lemma}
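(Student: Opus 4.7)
The plan is to reduce everything to an explicit description of the support. Observe that $\varphi_n(h) \neq 0$ if and only if $hF_n \cap F_n \neq \emptyset$, and in the group $H$ (written multiplicatively) this is equivalent to the existence of $f_1, f_2 \in F_n$ with $hf_1 = f_2$, i.e. $h = f_2 f_1^{-1}$. Hence
\[
\operatorname{supp}\varphi_n = F_n F_n^{-1} = \{f_2 f_1^{-1} : f_1, f_2 \in F_n\}.
\]
Once this identification is made, the three claims become straightforward statements about the finite sets $F_n$.

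First, I would note that $F_n$ is finite by hypothesis, hence $F_n F_n^{-1}$ is finite, giving the first assertion. For monotonicity of $\{\operatorname{supp}\varphi_n\}$, I would use the assumption that the F{\o}lner sequence is increasing: $F_n \subset F_{n+1}$ immediately yields $F_n F_n^{-1} \subset F_{n+1} F_{n+1}^{-1}$, i.e. $\operatorname{supp}\varphi_n \subset \operatorname{supp}\varphi_{n+1}$.

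For the exhausting property, given any $h \in H$, I would use that $\bigcup_n F_n = H$ combined with monotonicity. Indeed, the identity $e \in H$ lies in some $F_{n_0}$, and $h$ lies in some $F_{n_1}$; taking $N = \max(n_0, n_1)$ we obtain $e, h \in F_N$, so that $h = h \cdot e^{-1} \in F_N F_N^{-1} = \operatorname{supp}\varphi_N$. This shows $\bigcup_n \operatorname{supp}\varphi_n = H$ and completes the lemma.

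There is no real obstacle here; the entire argument is bookkeeping, and the only non-trivial input is the elementary observation that $\varphi_n$ vanishes precisely outside $F_n F_n^{-1}$. If any care is needed, it is only in making sure that the identity element is available in $F_N$ for large $N$, which is guaranteed by the exhausting hypothesis.
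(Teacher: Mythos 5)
Your proof is correct and follows the same route as the paper's: both identify $\operatorname{supp}\varphi_n$ with $F_nF_n^{-1}$ and then read off finiteness, monotonicity, and exhaustion from the corresponding properties of the sets $F_n$. You simply spell out the exhaustion step (using that the identity and $h$ eventually lie in a common $F_N$) in more detail than the paper does.
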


\begin{proof}
By definition, the support of $\varphi_n$ is the set $\{h\in H: hF_n\cap F_n\neq\emptyset\}$, which is easily seen to coincide with
$F_nF_n^{-1}$. Therefore, we have ${\rm supp}\,\varphi_n\subset {\rm supp}\,\varphi_{n+1}$ and $\cup_n{\rm supp}\, \varphi_n=H$.

\end{proof}

\begin{lemma}\label{defpos}
The functions $\varphi_n$ are all positive definite. Moreover, for every $h\in H$ $\lim_n \varphi_n(h)= 1$.
\end{lemma}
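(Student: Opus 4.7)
The plan is to recognise $\varphi_n$ as a (normalised) diagonal matrix coefficient of the left regular representation of $H$, from which positive definiteness will be immediate, and then to control the difference $1-\varphi_n(h)$ by the F\o{}lner ratio $|hF_n\triangle F_n|/|F_n|$.

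First, I would let $\lambda\colon H\to \mathcal{U}(\ell^2(H))$ denote the left regular representation, and observe that for the indicator vector $\xi_n\doteq \mathbf{1}_{F_n}\in \ell^2(H)$ one has
\[
\langle \lambda(h)\xi_n,\xi_n\rangle \;=\; \sum_{g\in H}\mathbf{1}_{F_n}(h^{-1}g)\,\overline{\mathbf{1}_{F_n}(g)} \;=\; |hF_n\cap F_n|,
\]
so that $\varphi_n(h)=\|\xi_n\|^{-2}\,\langle \lambda(h)\xi_n,\xi_n\rangle$. This is the key identity.

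Next, for positive definiteness I would take finitely many $h_1,\ldots,h_k\in H$ and $c_1,\ldots,c_k\in\IC$ and simply rewrite, using the unitarity of $\lambda$,
\[
\sum_{i,j} c_i\overline{c_j}\,\varphi_n(h_j^{-1}h_i) \;=\; \frac{1}{|F_n|}\sum_{i,j} c_i\overline{c_j}\,\langle \lambda(h_i)\xi_n,\lambda(h_j)\xi_n\rangle \;=\; \frac{1}{|F_n|}\Big\|\sum_i c_i\,\lambda(h_i)\xi_n\Big\|^2,
\]
which is manifestly non-negative. This is the standard way of seeing that matrix coefficients of unitary representations are positive definite, and it requires nothing beyond the explicit formula for $\varphi_n$.

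For the limit, I would start from the equality $|hF_n\cap F_n|=|F_n|-|F_n\setminus hF_n|$, so that $1-\varphi_n(h)=|F_n\setminus hF_n|/|F_n|$. Since $F_n\setminus hF_n\subseteq hF_n\triangle F_n$, this is bounded above by $|hF_n\triangle F_n|/|F_n|$, which tends to $0$ by the F\o{}lner property of $\{F_n\}$; as $\varphi_n(h)\le 1$ trivially, this yields $\varphi_n(h)\to 1$ pointwise. I do not foresee any genuine obstacle here: once the matrix-coefficient identification is in place, both statements reduce to bookkeeping with the F\o{}lner condition.
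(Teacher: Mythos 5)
Your proposal is correct and follows essentially the same route as the paper: positive definiteness via the identification $\varphi_n(h)=\|\chi_{F_n}\|^{-2}\langle\lambda_h\chi_{F_n},\chi_{F_n}\rangle$ as a matrix coefficient of the left regular representation, and the limit via elementary bookkeeping with the F{\o}lner ratio. Your version of the limit argument (bounding $1-\varphi_n(h)$ by $|hF_n\triangle F_n|/|F_n|$ directly) is if anything slightly cleaner than the paper's $\liminf$/$\limsup$ phrasing, but it is the same computation.
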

\begin{proof}
The statement about positive definiteness is clear, as $\varphi_n$ is nothing but $\frac{\langle\lambda_h \chi_{F_n}, \chi_{F_n}\rangle}{\langle\chi_{F_n}, \chi_{F_n}\rangle}$, where 
$\lambda: H\rightarrow \CU(\ell_2(H))$ is the left regular representation of $H$. Finally, the limit property comes from the  F{\o}lner property, i.e. 
$\lim_n\frac{|hF_n\triangle F_n|}{|F_n|}=0$ for every $h\in H$. 
Indeed, we certainly have $\limsup_n \varphi_n(h)\leq 1$.  On the other hand, we also have
$$
\liminf_n\varphi_n(h)=\liminf_n \frac{|hF_n\cap F_n|}{|F_n|}\geq\liminf_n \frac{|hF_n\cup F_n|}{|F_n|}-\lim_n \frac{|hF_n\triangle F_n|}{|F_n|}\geq 1
$$
This concludes the proof.
\end{proof}

As for every integer $n$ the support of the function $\varphi_n$ is finite, the sum
$$
S_n(x)\doteq \sum_{h=[(p,q)]\in H} \varphi_n(h) s_p^* F_{(p,q)}(x)s_q\quad \textrm{for every}\,x\in\CQ(\gxp)
$$
is well defined. We now want to prove  the above sums have good approximation properties.
To this aim, the above $S_n$'s are better understood as linear operators acting on the $C^*$-algebra $\CQ(\gxp)$, merely thought of as a Banach space w.r.t. its norm.

\begin{lemma}\label{bounded}
The linear operators $S_n$ are uniformly bounded. More precisely, we have $\|S_n\|=1$ for every $n\in\IN$.
\end{lemma}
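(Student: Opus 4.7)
The plan is to realize $S_n$ as an integral operator against a probability measure on the gauge group $T$; once this identification is made, the contractivity of each $\gamma_\chi$ will give $\|S_n\|\le 1$ for free, and the reverse bound will follow from $S_n(1)=1$.

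First, I would invoke Bochner's theorem. By Lemma~\ref{defpos}, $\varphi_n$ is positive definite on the countable discrete abelian group $H$, and clearly $\varphi_n(e)=1$. Hence there is a (unique) probability Borel measure $\mu_n$ on the Pontryagin dual $\hat H = T$ satisfying
\[
\varphi_n(h)=\int_T \chi(h)\, d\mu_n(\chi)\qquad \text{for every } h\in H.
\]
Using $\mu_n$, I would define a bounded linear operator $T_{\mu_n}\colon \CQ(\gxp)\to\CQ(\gxp)$ by the Bochner integral
\[
T_{\mu_n}(x)\doteq \int_T \gamma_\chi(x)\, d\mu_n(\chi),
\]
which is well defined because the gauge action of the compact group $T$ is strongly continuous. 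Since each $\gamma_\chi$ is a $*$-automorphism (hence isometric) and $\mu_n$ is a probability measure, one immediately gets $\|T_{\mu_n}(x)\|\le \int_T\|\gamma_\chi(x)\|\,d\mu_n(\chi)=\|x\|$, so $\|T_{\mu_n}\|\le 1$.

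The core of the proof is then to show $S_n=T_{\mu_n}$. For $a\in\CF$ and $(p,q)\in P(\perp)$, a direct computation with the gauge grading gives $\gamma_\chi(s_p^* a\, s_q)=\chi(p^{-1}q)\, s_p^* a\, s_q$, so that
\[
T_{\mu_n}(s_p^* a\, s_q)=\Bigl(\int_T \chi(p^{-1}q)\, d\mu_n(\chi)\Bigr) s_p^* a\, s_q=\varphi_n([(p,q)])\, s_p^* a\, s_q.
\]
By linearity and the reconstruction formula \eqref{eq:F_(p,q) reconstruction formula}, $T_{\mu_n}$ and $S_n$ therefore agree on the dense subspace $\newspan\{u_g s_{p'} s_{q'}^* u_h : g,h\in G, p',q'\in P\}$. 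Both operators extend continuously to all of $\CQ(\gxp)$: for $T_{\mu_n}$ this is the contractivity bound above, while $S_n$ is itself a \emph{finite} sum (supp$\,\varphi_n$ is finite by Lemma~\ref{support}) of compositions of the contractions $F_{(p,q)}$ with left/right multiplication by the isometries $s_p^*$ and $s_q$, hence bounded. Agreement on a dense set then gives $S_n=T_{\mu_n}$ globally.

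Putting the pieces together, $\|S_n\|\le 1$. For the matching lower bound, only the class of $(1,1)\in P(\perp)$ contributes to $S_n(1)$, with $F_{(1,1)}(1)=1$ and $\varphi_n(e)=1$, so $S_n(1)=1$ and $\|S_n\|=1$. The only subtlety I anticipate is the extension step: one must check carefully that $S_n$ is a bona fide bounded operator on the whole $C^*$-algebra despite being defined via the Fourier coefficients $F_{(p,q)}$, but the finiteness of $\supp\varphi_n$ makes this routine.
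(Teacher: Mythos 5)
Your proof is correct and follows essentially the same route as the paper: both arguments rest on realizing $S_n(x)=\int_T K_n(\chi)\gamma_\chi(x)\,d\chi$ with the Fej\'er-type kernel $K_n(\chi)=\sum_h\varphi_n(h)\chi(h)$ a probability density (your abstract Bochner measure $\mu_n$ is exactly $K_n(\chi)\,d\chi$, since $\varphi_n$ is finitely supported), and then use contractivity of the gauge automorphisms. The only organizational difference is that the paper derives the kernel representation by direct substitution and checks $K_n\ge 0$, $\|K_n\|_{L^1}=1$, whereas you define the integral operator first and match it with $S_n$ on the dense span; your explicit verification of the extension step and of $S_n(1)=1$ is a welcome tightening of details the paper leaves implicit.
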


\begin{proof}
Inserting the expression of the Fourier coefficients $ F_{(p,q)}(x)$ into the definition of the sums $S_n$, the following representation formula is easily arrived at
$$
S_n(x)=\int_T K_n(\chi)\gamma_\chi(x)\textrm{d}\chi,
$$
where the integral kernel $K_n$ is explicitly given by $K_n(\chi)=\sum_{h\in H} \varphi_n(h)\chi(h)$, and $\gamma_\chi$ is the gauge automorphism of $\CQ(\gxp)$ corresponding to the character $\chi\in T= \widehat{H}$.
Among  other things, the representation formula above  also has the merit of showing the sums $S_n$ do not depend on the representatives $(p,q)$ of $h=[(p,q)]$. 
The thesis will be fully  proved as soon as we show $\|K_n\|_{L^1(K)}=1$ for every $n\in\N$. This is done in two steps.
We first prove that $K_n(\chi)\geq 0$ for every $\chi\in T$. This is in turn achieved by an application of Bochner's theorem that a function is positive if and only if its Fourier transform is positive definite. In our case, this Fourier transform is easily computed. Indeed, 
$$
\widehat{K_n}(k)=\int_T K_n(\chi)\overline{\chi(k)}\textrm{d}\chi=\sum_{h\in H}\varphi_n(h)\int_T \chi(h)\overline{\chi(k)}\textrm{d}\chi=\varphi_n(k)
$$ 

Therefore, the functions $K_n$ are positive thanks to
Lemma \ref{defpos}. This allows us to compute their $L^1$-norms exactly. Indeed, we have
$$
\|K_n\|_{L^1(T)}=\int_T K_n(\chi)\textrm{d}\chi=\sum_{h\in H}\varphi_n(h)\int_T \chi(h)\textrm{d}\chi=\varphi_n(1_G)=1
$$
thanks to the fact that $\int_T\chi(h)\textrm{d}\chi=0$ unless $h=1_G$, in which case the integral is $1$. 
\end{proof}

We are now in a position to prove the main result of this paper.

\begin{thm}\label{maintheorem}
Let $\CQ(\gxp)$ be the boundary quotient $C^*$-algebra associated to an algebraic dynamical system $(G,P,\theta)$,
as at the beginning of Section \ref{prel}.
For every $x\in\CQ(\gxp)$ the sums $S_n(x)$ converge to $x$ in norm when $n$ goes to infinity.
\end{thm}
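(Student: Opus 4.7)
The plan is the classical density-plus-uniform-boundedness argument that underlies all Fej\'er-type statements. First I would establish norm convergence on the dense $*$-algebraic core $\mathcal{A}_0 \doteq \operatorname{span}\{u_g^{\phantom{*}} s_{p'}^{\phantom{*}} s_{q'}^* u_h^{\phantom{*}} \mid g,h\in G,\ p',q'\in P\}$ singled out in Section \ref{prel}, and then bootstrap to the whole of $\CQ(\gxp)$ by invoking the estimate $\|S_n\|=1$ from Lemma \ref{bounded}.

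For the first step, fix $a\in\mathcal{A}_0$. By the reconstruction formula \eqref{eq:F_(p,q) reconstruction formula} there is a finite subset $A(a)\subset P(\perp)/_\sim$ with $a=\sum_{[(p,q)]\in A(a)} s_p^* F_{(p,q)}(a) s_q^{\phantom{*}}$, while $F_{(p,q)}(a)=0$ for every class $[(p,q)]\notin A(a)$. By Lemma \ref{support} the supports $\operatorname{supp}\varphi_n$ are increasing and exhausting, so for every $n$ sufficiently large one has $A(a)\subset\operatorname{supp}\varphi_n$. Plugging this into the definition of $S_n$ collapses it to a finite sum and yields
\[
S_n(a)-a=\sum_{[(p,q)]\in A(a)} \bigl(\varphi_n([(p,q)])-1\bigr)\, s_p^* F_{(p,q)}(a) s_q^{\phantom{*}}.
\]
Since $A(a)$ is finite and $\varphi_n(h)\to 1$ pointwise by Lemma \ref{defpos}, each of the finitely many summands tends to $0$ in norm, hence $\|S_n(a)-a\|\to 0$.

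For the second step, given $x\in\CQ(\gxp)$ and $\varepsilon>0$, pick $a\in\mathcal{A}_0$ with $\|x-a\|<\varepsilon/3$. Linearity of $S_n$ together with $\|S_n\|\leq 1$ gives
\[
\|S_n(x)-x\|\leq \|S_n(x-a)\|+\|S_n(a)-a\|+\|a-x\|\leq 2\|x-a\|+\|S_n(a)-a\|,
\]
and choosing $n$ large enough that $\|S_n(a)-a\|<\varepsilon/3$ produces $\|S_n(x)-x\|<\varepsilon$.

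The main, and essentially only, delicate point in this scheme is the identification of $S_n(a)$ with a finite truncation on the dense subspace: this hinges on the uniqueness of the expansion \eqref{eq:F_(p,q) reconstruction formula} and on the fact that the Fourier coefficient maps $F_{(p,q)}$ respect that expansion, so that the Fourier coefficients of an element of $\mathcal{A}_0$ really are supported precisely on $A(a)$. Once that compatibility is recorded, the rest is purely the standard three-$\varepsilon$ extension by uniform boundedness and presents no further difficulty.
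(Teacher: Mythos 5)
Your proposal is correct and follows essentially the same route as the paper: convergence on the dense span $\operatorname{span}\{u_g^{\phantom{*}} s_{p'}^{\phantom{*}} s_{q'}^* u_h^{\phantom{*}}\}$ via the finite reconstruction formula \eqref{eq:F_(p,q) reconstruction formula} together with Lemmas \ref{support} and \ref{defpos}, then the standard $\varepsilon/3$ extension using $\|S_n\|=1$ from Lemma \ref{bounded}. The point you flag as delicate --- that the Fourier coefficients of an element of the core vanish outside $A(a)$, so $S_n(a)$ really is the finite weighted truncation --- is exactly what the paper relies on (implicitly, citing the uniqueness statement from \cite[Section 4]{ACRS}), so there is no divergence from the published argument.
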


\begin{proof}
Phrased differently, all we have to prove is $\{S_n:n\in\IN\}$ converges strongly to the identity operator on $\CQ(\gxp)$. As the the sequence is bounded
by Lemma \ref{bounded}, it is enough to check the statement on a convenient dense
subspace of our $C^*$-algebra.
In our previous work  \cite{ACRS} it was shown that  for every $x$ in the norm-dense subalgebra 
$$\CA\doteq {\rm span}\{ u_g^{\phantom{*}}s_{p'}^{\phantom{*}}s_{q'}^*u_h^{\phantom{*}} \mid g,h \in G, p',q' \in P\}\subset\CQ(\gxp)
$$
there is a finite set $F_x\subset H$ such that $x=\sum_{h=[(p,q)]\in F_x} s_p^*F_{(p,q)}(x)s_q$
which is moreover uniquely determined under the requirement that $F_x \subset  P(\perp)/_\sim\subset H$. It is obvious that  for such a
$x$ the sequence $S_n(x)=\sum_{h=[(p,q)]\in H}\varphi_n(h)s_p^*F_{(p,q)}(x)s_q$ must converge to $x$, since
we will eventually have $F_x\subset{\rm supp}\,\varphi_n$  and $\varphi_n(h)\rightarrow 1$ by Lemmas \ref{support} and \ref{defpos} respectively. 
We can also be more explicit and display the usual underlying $\frac{\varepsilon}{3}$-argument in full detail.  So given
any $x\in\CQ(\gxp)$, pick an element $x_\varepsilon\in\CA$ such that $\|x-x_\varepsilon\|<\frac{\varepsilon}{3}$. Then we have
$$
\|x-S_n(x)\|\leq \|x-x_\varepsilon\|+\|x_\varepsilon-S_n(x_\varepsilon)\|+\|S_n(x_\varepsilon-x)\|<\frac{2\varepsilon}{3}+\|x_\varepsilon-S_n(x_\varepsilon)\|
$$
Since $S_n(x_\varepsilon)$ converges to $x_\varepsilon$, there exists $N_\varepsilon\in\IN$ such that  $\|S_n(x_\varepsilon)-x_\varepsilon\|<\frac{\varepsilon}{3}$ for every $n\geq N_{\varepsilon}$. This finally says that $\|S_n(x)-x\|<\varepsilon$ for every $n\geq N_\varepsilon$.
\end{proof}

\begin{example}
The $2$-adic ring $C^*$-algebra $\CQ_2$ is by definition  the universal $C^*$-algebra generated by a unitary $U$
and a (proper) isometry $S_2$ such that $S_2U=U^2S_2$ and $S_2S_2^*+US_2S_2^*U^*=1$ (for more information see e.g. \cite{LarsenLi, ACR, ACR2, ACR3, ACR4}).
The Cuntz algebra $\CO_2$, as is known, is the universal $C^*$-algebra generated by two isometries $S_1, S_2$ such that
$S_1S_1^*+S_2S_2^*=1$.  
It is rather obvious that $\CO_2$ embeds into $\CQ_2$. Indeed, there exists an injective $^*$-homomorphism that sends $S_1$ to $US_2$ and $S_2$ to $S_2$.
As shown in \cite{ACR}, $\CQ_2$ can also be realized as the boundary quotient $C^*$-algebra rising from
the algebraic dynamical system in which $P$ is the unital semigroup generated by $2$ in $\IN^\times$ acting on $Z$
by multiplication. Clearly, the Grothendieck group $H$ of $P$ is isomorphic with $\IZ$. 
If we choose the F{\o}lner sequence $\{F_n:n\in\IN\}$ given by $F_n\doteq[-n,n]$, then the functions $\varphi_n: \IZ\to \IR$ are seen at once to be $\varphi_n(i)=1-|i|/(2n+1)$, $-n \leq i \leq n$ and $\varphi_n(i)=0$ otherwise.
In this setting Theorem \ref{maintheorem} gives  the following formula, which holds in the norm topology
\begin{align*}
x & =\lim_n F_{(1,1)}(x)+\sum_{i=1}^n \left( 1-\frac{|i|}{2n+1}\right) (S_2^*)^i F_{(2^i,1)}(x)+\left( 1-\frac{|i|}{2n+1}\right)F_{(1,2^i)}(x)S_2^i\\
& = \lim_n \int_\IT \widetilde{\alpha_z}(x) \textrm{d}z +\sum_{i=1}^n \left( 1-\frac{|i|}{2n+1}\right)  \int_\IT (\widetilde{\alpha_z}(x)z^i + \widetilde{\alpha_z}(x)z^{-i}) \textrm{d}z 
\end{align*}
where $\textrm{d}z$ is the normalized Haar measure on the one-dimensional torus. Of course, in the previous formula the isometry $S_2$ may be replaced by $S_1$ and thus the limit may be expressed in terms of the maps $F_{i}: \CQ_2\to \CQ_2^\IT$ and $F_{-i}: \CQ_2\to \CQ_2^\IT$, with $i\geq 0$, defined in \cite[Section 3.3]{ACR} . 
We also note that when $x\in\CO_2$, we recover the well-known  result for the Cuntz algebra $\CO_2$ (see e.g. \cite{Power}). 
Finally, a similar formula could also be written for the $p$-adic ring $C^*$-algebra $\CQ_p$ for all $2\leq p<\infty$. \\
In  \cite{NEK} Nekrashevych found a natural embedding of the Thompson groups into $\CU(\CO_2)$, which in turn embeds into $\CU(\CQ_2)$.
The Thompson group $F$ is here represented as  the subgroup generated by the elements $x_0\doteq S_2^2S_2^*+S_2S_1(S_1S_2)^*+S_1(S_1^*)^2$ and $x_1\doteq S_2S_2^*+S_1S_2^2(S_1S_2)^*+S_1S_2S_1(S_1^2S_2)^*+S_1^2(S_1^*)^3$. 
We find it interesting to rewrite these elements in terms of the Fourier coefficients defined above. 
After some easy computations one gets
\begin{align*}
x_0 & = F_{(1,2)}(x_0)S_2+F_{(1,1)}(x_0)+S_2^*F_{(2,1)}(x_0)
\end{align*}
where $F_{(1,2)}(x_0)=S_2^2(S_2^2)^*$, $F_{(1,1)}(x_0)=S_2S_1(S_1S_2)^*$, $F_{(2,1)}(x_0)=S_2S_1(S_1^2)^*$, and
\begin{align*}
x_1 & = F_{(1,2)}(x_1)S_2+F_{(1,1)}(x_1)+S_2^*F_{(2,1)}(x_1)
\end{align*}
where $F_{(1,2)}(x_1)=S_1S_2^2(S_2S_1S_2)^*$, $F_{(1,1)}(x_1)=S_2S_2^*+S_1S_2S_1(S_1^2S_2)^*$, $F_{(2,1)}(x_1)=S_2S_1^2(S_2^3)^*$.
\end{example}
\begin{example}
As already observed in \cite{ACRS}, the well-known $C^*$-algebra $\CQ_\IN$ may be recovered as the boundary quotient $\CQ(\IZ\rtimes \IN^\times)$ where $P=\IN^\times$ acts on $\IZ$ by multiplication.
In this case the Grothendieck group $H$ is given by the positive rational integers $\IQ_+^*\cong \oplus_{i=1}^\infty \IZ$ and its dual is $T\cong\prod_{i=1}^\infty \IT$. A  typical F{\o}lner sequence for $H$ is given by 
$$
F_n \doteq \{ (x_i)\in H \; | \; -n \leq x_j \leq n \; \forall \; 1 \leq j \leq n , x_i=0 \; \forall \; i>n \}\; .
$$
We observe that if $h=(h_1, \ldots , h_k, 0, \ldots)$, $n>\max\{k, |h_1|, \ldots , |h_k|\}$, then 
$$
\varphi_n(h)= \frac{\prod_{i=1}^k (2n+1-|h_i|) }{(2n+1)^k}\; . 
$$ 
In this particular example, Theorem \ref{maintheorem} gives  the following slightly less usual formula, which holds for any $x\in\CQ_\IN$ 
\begin{align*}
x & = \lim_n \sum_{h\in H} \sum_{(f_i)\in F_n}  \varphi_n(h)  \int_T \left(\prod_{i=1}^\infty z_i^{f_i}\right) \alpha_{(z_k)}(x) \textrm{d}\mu
\end{align*}
where d$\mu$ is the normalized Haar measure on $T$, the sequence $(z_i)$ is an element in $T$ and $\alpha_{(z_i)}$ is the gauge automorphism mapping $s_p$ to $z_ps_p$, $p \in \IN^\times$. Of course the infinite product is well defined because $f_i$ is eventually zero.
\end{example}

\section{An application: the relative commutant of a family of generating isometries}

In \cite{ACRS} the relative commutant $C^*(\{s_p: p\in P\})'\cap\CQ(\gxp)$ has been proved to be as small
as possible, to wit it is nothing but the $C^*$-subalgebra generated by the set of unitaries $\{s_p:p\in P^*\}$, under the hypothesis that $P^*$ is finite.
Here $P^*\subset P$ is the group of all invertible elements of $P$.
As an application of the Fej\'er-type theorem obtained above, we now intend to show  that the finiteness assumption is actually unnecessary. 

\begin{thm}
If $\bigcap_{p \in P} \theta_p(G) = \{1_G\}$,
then the relative commutant $C^*(\{s_p: p\in P\})'\cap\CQ(\gxp)$ coincides with  $C^*(P^*)$.
In particular, the relative commutant  reduces to  the multiples of the identity if $P^*=\{1_P\}$.
\end{thm}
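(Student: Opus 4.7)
The plan is to leverage the Fejér theorem (Theorem~\ref{maintheorem}) in order to reduce the problem to an analysis of gauge-homogeneous elements in the relative commutant. The inclusion $C^*(P^*)\subseteq C^*(\{s_p:p\in P\})'\cap\CQ(\gxp)$ is immediate: for $u\in P^*$ the isometry $s_u$ is in fact unitary with $s_u^*=s_{u^{-1}}$, and because $P$ is abelian the identity $s_us_p=s_ps_u$ automatically extends to $s_p^*$ as well.

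For the reverse inclusion, fix $x \in C^*(\{s_p:p\in P\})'\cap\CQ(\gxp)$. By Theorem~\ref{maintheorem}, $S_n(x)\to x$ in norm, so, since $C^*(P^*)$ is norm-closed, it will be enough to show $S_n(x)\in C^*(P^*)$ for every $n$. For each $h\in H$ I would introduce the spectral projection $E_h(y)\doteq\int_T\overline{\chi(h)}\,\gamma_\chi(y)\,d\chi$, which extracts the gauge-homogeneous component of $y$ of weight $h$; a direct computation gives $E_h(y)=s_p^*F_{(p,q)}(y)s_q$ whenever $h=[(p,q)]\in P(\perp)/\sim$. Moreover, $E_h$ preserves the relative commutant: from $\gamma_\chi(s_p)=\chi(p)s_p$ the gauge action preserves $C^*(\{s_p\})$, hence its commutant, and this is inherited by $E_h$ through the integral.

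The core of the argument is then the following sub-claim: \emph{if $y\in C^*(\{s_p\})'\cap\CQ(\gxp)$ is gauge-homogeneous of weight $h\in H$, then $y\in\C s_h$ when $h\in P^*$ and $y=0$ otherwise.} Granting this, $S_n(x)=\sum_h\varphi_n(h)E_h(x)$ becomes a finite linear combination of the unitaries $\{s_u:u\in P^*\}$, hence lies in $C^*(P^*)$, and passing to the norm limit yields $x\in C^*(P^*)$. For $h\in P^*$ one sets $z\doteq y\,s_h^*\in\CF$ (which has weight $0$), notes that $z$ still commutes with $\{s_p:p\in P\}$, and the task reduces to showing $\CF\cap C^*(\{s_p\})'=\C\cdot 1$, i.e.\ the $P^*=\{1_P\}$ case of the statement. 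For $h\notin P^*$ one writes $y=s_p^*Fs_q$ with $(p,q)\in P(\perp)$ coprime and $F=F_{(p,q)}(y)\in\CF$, and combines the commutation of $y$ with $s_p,\,s_p^*,\,s_q,\,s_q^*$ together with the coprime identity $s_p^*s_q=s_qs_p^*$ coming from relation~(II) to deduce that $F$ must vanish.

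The main obstacle is precisely this sub-claim, and this is where the hypothesis $\bigcap_p\theta_p(G)=\{1_G\}$ must intervene: it is the source of the rigidity inside the UHF-like fixed-point algebra $\CF$ needed to pin its relative commutant with $\{s_p:p\in P\}$ down to the scalars. The vanishing statement for $h\notin P^*$ is the truly delicate part, since the proof for finite $P^*$ given in \cite{ACRS} leaned on finite-dimensionality of $C^*(P^*)$ and cannot be imported verbatim; the plan would be to iterate the identities obtained by moving each $s_r$ past $s_p^*Fs_q$, combine them with the defining relations of $\CQ(\gxp)$, and invoke the minimality condition to force $F=0$.
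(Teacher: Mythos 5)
Your reduction is exactly the one the paper uses: the Fej\'er theorem turns the problem into a statement about the gauge-homogeneous components $E_h(x)=s_p^*F_{(p,q)}(x)s_q$ of an element $x$ of the relative commutant, and your observations that $E_h$ preserves $C^*(\{s_p:p\in P\})'\cap\CQ(\gxp)$, that $S_n(x)=\sum_h\varphi_n(h)E_h(x)$ is a finite sum, and that the easy inclusion follows from $s_u$ being unitary for $u\in P^*$, are all correct.

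The genuine gap is the sub-claim itself, which you explicitly leave as ``the main obstacle'': that a homogeneous element of weight $h$ in the relative commutant lies in $\C s_h$ when $h\in P^*$ and vanishes otherwise. This is the entire content of the theorem beyond the Fej\'er machinery --- it is where the hypothesis $\bigcap_{p\in P}\theta_p(G)=\{1_G\}$ does all its work --- and your proposal offers only a programme (``iterate the identities\dots invoke the minimality condition'') rather than an argument; nothing in the sketch for $h\notin P(\perp)/_\sim\cap\, P^*$ actually shows where the intersection condition enters or why $F$ must vanish. The paper does not reprove this step either: it imports from \cite{ACRS} the fact that for $w$ in the relative commutant one has $F_{(p,q)}(w)=0$ unless $[(p,q)]\in P^*$, in which case the coefficient is a scalar, a statement established there without any finiteness assumption on $P^*$. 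Your worry that the argument of \cite{ACRS} ``leaned on finite-dimensionality of $C^*(P^*)$'' applies only to the final step of reassembling $w$ from its finitely many Fourier coefficients --- which is precisely the step the new Fej\'er theorem is designed to replace --- not to the computation of the coefficients themselves. So the proposal has the correct skeleton, identical to the paper's, but the load-bearing lemma is neither proved nor correctly sourced, and as written the proof is incomplete.
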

\begin{proof}
We only have to deal with the inclusion $C^*(\{s_p: p\in P\})'\cap\CQ(\gxp)\subset C^*(P^*)$, for the reverse inclusion is trivially checked. If $w\in \CQ(\gxp)$ is a unitary that commutes with  $s_p$ for every $p\in P$, then  $F_{(p,q)}(w)=0$ unless  $[(p,q)]\in P^*$, in which case the Fourier coefficient is just a scalar, as we had already proved in
\cite{ACRS}. For such a $w$ the sequence $\{S_n(w): n\in\IN\}$ is therefore all contained in $C^*(P^*)$, and thus $w$ is also an element of
$C^*(P^*)$ since it is the limit of the sequence $\frac{1}{N}\sum_{i=0}^{N-1}S_n(w)$. 
\end{proof}

\begin{remark}
It might be worth stressing that the above application really seems to be out of the reach 
of the Fej\'er-type theorems we already knew of before writing
the present note.
\end{remark}

We end with a result concerning the structure of $C^*(P^*)$.
As one would expect, $C^*(P^*)$ can be proved to be isomorphic with the group $C^*$-algebra $C^*_{\rm{red}}(P^*)$  in a  number of relevant cases. To this aim, we recall that for any discrete group $\Gamma$, 
 the reduced $C^*$-algebra of $\Gamma$, here denoted by  $C^*_{\rm{red}}(\Gamma)$, 
 is the $C^*$-subalgebra of $\CB(\ell_2(\Gamma))$ generated by the left regular representation of $\Gamma$. Phrased differently, 
$C^*_{\rm{red}}(\Gamma)$ is the concrete $C^*$-algebra generated by the set of unitaries $\{\lambda_\gamma: \gamma\in \Gamma\}$ acting on $\ell_2(\Gamma)$ as $\lambda_\gamma \delta_k\doteq \delta_{\gamma k}$, for any $k\in\Gamma$, where
$\{\delta_k:k\in\Gamma\}$ is the canonical basis of $\ell_2(\Gamma)$. We denote by $C^*_\pi(\Gamma)$ the full $C^*$-algebra
of $\Gamma$. This is the enveloping $C^*$-algebra of the convolution algebra $L^1(\Gamma)$, namely the completion of the $*$-Banach algebra $L^1(\Gamma)$ under the maximal $C^*$-norm. The map given by
$$
\ell_1(\Gamma)\ni \sum_{\gamma\in\Gamma}\alpha_\gamma\delta_\gamma  \rightarrow \sum_{\gamma\in\Gamma} \alpha_\gamma \lambda_\gamma\in C^*(\Gamma)_{\rm{red}}
$$ 
extends to a surjective $^*$-homomorphism $\pi:C^*_\pi(\Gamma)\rightarrow C^*_{\rm{red}}(\Gamma)$
by the very definition of the maximal $C^*$-norm. Furthermore,  the map is well known to be also injective if and only if
$\Gamma$ is amenable, which is certainly true when $\Gamma$ is abelian.\\

The cases where our claim on $C^*(P^*)$ holds are singled out by choosing a suitable class of algebraic dynamical systems. More precisely, we need the following condition to be fulfilled: there exists $g_0\in G$ such that $\theta_p(g_0)=g_0$ with $p\in P^*$ implies $p=1_P$. 
Because $g_0$ must not depend on $p\in P^*$, the condition is  actually stronger than merely requiring that $P^*\ni p\rightarrow\theta_p\in{\rm Hom}(G)$ is injective. However, it is satisfied in many of the examples we discussed in \cite{ACRS}. 
\begin{proposition}
The $C^*$-algebra $C^*(P^*)$ is isomorphic with $C^*_{\rm{red}}(P^*)$ if $P$ is abelian and $(G,P,\theta)$ satisfies the condition above.
\end{proposition}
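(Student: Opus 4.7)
The plan is to invert the canonical surjection $\phi\colon C^*_{\rm{red}}(P^*)\to C^*(P^*)$ arising from amenability of $P^*$, by building on $C^*(P^*)$ a tracial state that pulls back via $\phi$ to the canonical faithful trace on $C^*_{\rm{red}}(P^*)$.

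To set things up, since $P$ is abelian so is $P^*$, which is therefore amenable, and $C^*_\pi(P^*)=C^*_{\rm{red}}(P^*)$. For every $p\in P^*$ the homomorphism $\theta_p$ is an automorphism of $G$ with inverse $\theta_{p^{-1}}$, so $N_p=1$ and relation (III) forces $s_ps_p^*=1$; hence each $s_p$ is unitary and $p\mapsto s_p$ is a unitary representation of $P^*$ in $\CQ(\gxp)$. The universal property of the group $C^*$-algebra then produces a surjective $*$-homomorphism $\phi\colon C^*_{\rm{red}}(P^*)\to C^*(P^*)$ sending $\lambda_p$ to $s_p$, reducing the problem to showing $\phi$ is injective.

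Next I would transport the canonical trace through the gauge-invariant conditional expectation $E(a)=\int_T\gamma_\chi(a)\,d\chi$. Since $\int_T\chi(p)\,d\chi=\delta_{p,1_P}$, a direct computation gives $E(\sum_{p\in P^*}c_ps_p)=c_{1_P}\cdot 1$ on finite sums. By norm-continuity this extends to the closure, so $E$ maps $C^*(P^*)$ into $\mathbb{C}\cdot 1$; identifying $\mathbb{C}\cdot 1$ with $\mathbb{C}$ we obtain a state $\tau\colon C^*(P^*)\to\mathbb{C}$ with $\tau(s_p)=\delta_{p,1_P}$. The hypothesis on $g_0$ is exploited at this stage to ensure that the representation $p\mapsto s_p$ of $P^*$ is genuinely faithful, so that $\tau$ really corresponds to the canonical trace of $P^*$ rather than of a quotient: the relation $s_ru_{g_0}=u_{\theta_r(g_0)}s_r$ combined with the rigidity of $\theta$ at $g_0$ rules out a non-trivial collapse of some $s_r$ to the identity.

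To conclude, the canonical trace $\tau_0$ on $C^*_{\rm{red}}(P^*)$, characterised by $\tau_0(\lambda_p)=\delta_{p,1_P}$, coincides with $\tau\circ\phi$ on the generators $\lambda_p$ and hence on all of $C^*_{\rm{red}}(P^*)$ by continuity. Since $P^*$ is amenable, $\tau_0$ is faithful, so for $x\in\ker\phi$ we get $\tau_0(x^*x)=\tau(\phi(x)^*\phi(x))=0$, whence $x=0$; thus $\phi$ is an isomorphism. The step that I expect to require most care is the promotion of the algebraic identity for $E$ on finite sums to the statement $E(C^*(P^*))\subseteq\mathbb{C}\cdot 1$ on the norm closure, which rests on norm-continuity of $E$ together with closedness of $\mathbb{C}\cdot 1$; once this is in hand the amenability of $P^*$ closes the argument in a completely standard way.
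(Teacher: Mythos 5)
Your argument is correct, but it is genuinely different from the one in the paper. The paper proves injectivity of the surjection $C^*_{\rm red}(P^*)\to C^*(P^*)$ by exhibiting an explicit isometry $W\colon\ell_2(P^*)\to\ell_2(G)$, $W\delta_p=\delta_{\theta_p(g_0)}$, which intertwines the left regular representation with $p\mapsto s_p$ acting on $\ell_2(G)$; the hypothesis on $g_0$ is exactly what makes $W$ an isometry, and compression by $W$ yields $\|\sum\alpha_p\lambda_p\|\le\|\sum\alpha_p s_p\|$. You instead restrict the gauge expectation $E=\int_T\gamma_\chi(\cdot)\,d\chi$ to $C^*(P^*)$: since $P$ embeds in its Grothendieck group $H$ and $T=\widehat H$ separates points of $H$, one gets $E(s_p)=\delta_{p,1_P}\cdot 1$, so $E$ produces a state $\tau$ on $C^*(P^*)$ pulling back to the canonical faithful trace $\tau_0$ on $C^*_{\rm red}(P^*)$, and faithfulness kills the kernel. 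Both arguments are sound; yours is arguably cleaner and, notably, \emph{never actually uses the hypothesis on $g_0$}: your remark that the hypothesis is needed ``to ensure the representation $p\mapsto s_p$ is faithful'' is a red herring, since faithfulness is a consequence of the identity $\tau\circ\phi=\tau_0$ rather than an input to it (if $s_p=s_q$ with $p\neq q$, then $0=\tau(\phi(\lambda_p-\lambda_q)^*\phi(\lambda_p-\lambda_q))=\tau_0((\lambda_p-\lambda_q)^*(\lambda_p-\lambda_q))=2$, a contradiction). So your route in fact establishes the proposition for any abelian $P$ (granted $\CQ(\gxp)\neq 0$), whereas the paper's intertwiner is tied to the existence of a point $g_0$ with free $P^*$-orbit. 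Two small cosmetic points: faithfulness of $\tau_0$ on $C^*_{\rm red}(\Gamma)$ holds for every discrete group and does not require amenability (which is only needed to identify $C^*_\pi(P^*)$ with $C^*_{\rm red}(P^*)$), and the step you flag as delicate --- passing from $E(\operatorname{span}\{s_p\})\subseteq\C 1$ to $E(C^*(P^*))\subseteq\C 1$ --- is immediate from contractivity of $E$ and closedness of $\C 1$.
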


\begin{proof}
 
The map $P^*\ni p\rightarrow s_p\in \CU(\ell_2(G))$ is a unitary representation of the discrete group $P^*$, therefore it can be lifted to a surjective  $^*$-homomorphism $\rho: C^*_\pi(P^*)\rightarrow C^*(P^*)$. Being abelian, $P^*$ is amenable as well,
hence the above homomorphism can be seen as an epimorphism from $C^*_{\rm{red}}(P^*)$   to $C^*(P^*)$, which   we still denote by $\rho$. The conclusion is immediately got to if $\rho$ is shown to be injective.
To this aim, we consider the isometry $W:\ell_2(P^*)\rightarrow \ell_2(G)$ that acts on the canonical basis of $\ell_2(P^*)$ as $W\delta_p\doteq \delta_{\theta_p(g_0)}$  for every $p\in P^*$, where $g_0\in G$ is such that $\theta_p(g_0)=g_0$ implies
$p=1_P$, which says $\theta_p(g_0)=\theta_q(g_0)$ is possible only when $p=q$.
Because the intertwining relation $W\lambda_p=s_p W$ is easily checked for every $p\in P^*$ (indeed, $W\lambda_p\delta_q=W\delta_{pq}=\delta_{\theta_{pq}(g_0)}=\delta_{\theta_p(\theta_q(g_0))}=s_pW\delta_q$), we see that   $\sum_{p\in F} \alpha_p\lambda_p= W^*( \sum_{p\in F} \alpha_p s_p)W$, for every finite set $F\subset P^*$, where the coefficients $\alpha_p$ are complex numbers. In particular, 
we find 
$$
\Big\|\sum_{p\in F} \alpha_p\lambda_p\Big\|= \Big\|W^*\Big( \sum_{p\in F} \alpha_p s_p\Big)W\Big\|\leq \Big\|\sum_{p\in F} \alpha_p s_p\Big\|
=\Big\|\rho\Big(\sum_{p\in F} \alpha_ps_p\Big)\Big\|
$$
Therefore, the inequality $\|x\|\leq \|\rho(x)\|$ holds for every $x\in C^*_{\rm{red}}(P^*)$ by density.
\end{proof}


\section*{References}
\begin{biblist}
\bibselect{bib}
\end{biblist}

\end{document}